\newtheorem{theorem}{Theorem}[section]
\newtheorem{lemma}[theorem]{Lemma}
\newtheorem{corollary}[theorem]{Corollary}
\newtheorem{proposition}[theorem]{Proposition}
\newtheorem{remark}[theorem]{Remark}
\begin{document}
\title[New Inequalities for $q$-ary Constant-Weight Codes]{New Inequalities for $q$-ary Constant-Weight Codes}

\author[Hyun Kwang Kim and Phan Thanh Toan]{Hyun Kwang Kim and Phan Thanh Toan}

\address[Hyun Kwang Kim]{Department of Mathematics\\
Pohang University of Science and Technology\\ Pohang 790-784,
Republic of Korea}
\email{hkkim@postech.ac.kr}

\address[Phan Thanh Toan]{Department of Mathematics\\
Pohang University of Science and Technology\\ Pohang 790-784,
Republic of Korea}
\email{pttoan@postech.ac.kr}


\thanks{2010 Mathematics Subject Classification: 94B25, 94B65}
\thanks{Key Words and Phrases: binary code, Delsarte inequality, linear programming, $q$-ary code.}

\begin{abstract}
Using double counting, we prove Delsarte inequalities for $q$-ary codes and their improvements. Applying the same technique to $q$-ary constant-weight codes, we obtain new inequalities for $q$-ary constant-weight codes.
\end{abstract}

\maketitle

\section{Introduction}

Let $\mathbb{F}_q$ be a finite field with $q$ elements and let $n$ be a positive integer. The {\em (Hamming) distance} between two vectors $u$ and $v$ in $\mathbb{F}_q^n$, denoted by $d(u,v)$, is the number of coordinates where they differ. A {\em $q$-ary code} of {\em length} $n$ is a subset of $\mathbb{F}_q^n$. If $\mathcal C$ is a $q$-ary code (of length $n$), then each element of $\mathcal C$ is called a {\em codeword}. The {\em size} of $\mathcal C$, denoted by $|\mathcal C|$, is the number of codewords in $\mathcal C$. Let $\mathcal C$ be a $q$-ary code of length $n$. The {\em distance distribution} $\{B_i\}_{i=0}^n$ of $\mathcal C$ is defined by
\begin{eqnarray}
B_i = \frac{1}{|\mathcal C|} \cdot |\{(u,v) \mid u, v \in \mathcal C, d(u,v) = i\}|.
\end{eqnarray}
It is well known that for each $k = 1, 2, \ldots, n$,
\begin{eqnarray}\label{del}
\sum_{i=0}^n P_k(n;i) B_i \geq 0,
\end{eqnarray}
where $P_k(n;x)$ is the {\em Krawtchouk polynomial} given by
\begin{eqnarray}
P_k(n;x) = \sum_{j=0}^k (-1)^j (q-1)^{k-j} \left(x \atop j \right) \left(n-x \atop k-j \right).
\end{eqnarray}
The inequalities (\ref{del}) are called the {\em Delsarte inequalities}, which were proved by Delsarte in 1973 \cite{d}. These equalities are extremely useful in coding theory (they are used to give upper bounds on sizes of $q$-ary codes through linear programming \cite{d,ms}).

When $q=2$, using double counting, Kang, Kim, and Toan were able to prove simultaneously the Delsarte inequalities and two known improvements when the size of the binary code is odd and congruent to $2$ modulo $4$, respectively \cite{kkt}. Applying the same technique to binary constant-weight codes, they obtained new linear inequalities, which allowing them to give new upper bounds on sizes of binary constant-weight codes.

The purpose of this paper is to generalize the results in \cite{kkt} to arbitrary $q$-ary codes. In Section \ref{ss2}, we prove simultaneously Delsarte inequalities for $q$-ary codes and their improvements (Theorem \ref{delthm}). Applying the same technique to $q$-ary constant-weight codes, in Section \ref{ss3}, we obtain new inequalities for $q$-ary constant-weight codes (Theorem \ref{maintr}). These inequalities generalize inequalities shown by \"{O}sterg\aa rd in \cite{o}.

\section{Delsarte Inequalities for $q$-Ary Codes}\label{ss2}

In this section, we prove the Delsarte inequalities for $q$-ary codes and their improvements. For the improvements of the Delsarte inequalities, see \cite{as}. A simple proof of the Delsarte inequalities appeared in \cite{sv}. For two vectors $a = (a_1, a_2, \ldots, a_j)$ and $b = (b_1, b_2, \ldots, b_j)$ in $\mathbb{F}_q^j$, the inner product of $a$ and $b$ is defined by $a \cdot b = a_1 b_1 + a_2b_2 + \cdots + a_j b_j.$
Let $\mathbb{F}_q^*$ be the set of nonzero elements in $\mathbb{F}_q$. For each $a = (a_1, a_2, \ldots, a_j) \in (\mathbb{F}_q^*)^j$, denote
\begin{eqnarray}
N(a) = |\{b = (b_1, b_2, \ldots, b_j) \in (\mathbb{F}_q^*)^j \mid a \cdot b \not = 0\}|
\end{eqnarray}
and
\begin{eqnarray}
Z(a) = |\{b = (b_1, b_2, \ldots, b_j) \in (\mathbb{F}_q^*)^j \mid a \cdot b  = 0\}|.
\end{eqnarray}

\begin{proposition}\label{prop21}
For each $a = (a_1, a_2, \ldots, a_j) \in (\mathbb{F}_q^*)^j$,
\begin{eqnarray}\label{mot1}
N(a) = \frac{q-1}{q} [ (q-1)^j - (-1)^j]
\end{eqnarray}
and
\begin{eqnarray}\label{hai2}
Z(a) = (q-1)^j - \frac{q-1}{q} [ (q-1)^j - (-1)^j].
\end{eqnarray}
\end{proposition}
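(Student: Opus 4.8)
The plan is to first reduce two formulas to one. Since $b$ ranges over the finite set $(\mathbb{F}_q^*)^j$, which has cardinality $(q-1)^j$, and since every such $b$ either satisfies $a \cdot b = 0$ or $a \cdot b \neq 0$, we have $N(a) + Z(a) = (q-1)^j$. Thus (\ref{hai2}) follows immediately from (\ref{mot1}) by subtraction, and it suffices to prove the formula for $N(a)$, or equivalently the formula for $Z(a)$.

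The decisive first step is to eliminate the dependence on $a$. For fixed $a = (a_1, \ldots, a_j) \in (\mathbb{F}_q^*)^j$, the coordinatewise map $b = (b_1, \ldots, b_j) \mapsto c = (a_1 b_1, \ldots, a_j b_j)$ is a bijection of $(\mathbb{F}_q^*)^j$ onto itself, because each factor $b_i \mapsto a_i b_i$ is a bijection of $\mathbb{F}_q^*$ (here is where $a_i \neq 0$ is used), and under this map $a \cdot b = \sum_i a_i b_i = \sum_i c_i$. Consequently
\[
Z(a) = |\{c = (c_1, \ldots, c_j) \in (\mathbb{F}_q^*)^j \mid c_1 + \cdots + c_j = 0\}|,
\]
which no longer depends on $a$. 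I would write $Z_j$ for this common value and set $N_j = (q-1)^j - Z_j$, so that the entire problem is reduced to counting $j$-tuples of nonzero field elements whose sum vanishes.

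To evaluate $Z_j$ I would use a short recurrence. Given any $(c_1, \ldots, c_{j-1}) \in (\mathbb{F}_q^*)^{j-1}$, the last coordinate of a zero-sum tuple is forced to be $c_j = -(c_1 + \cdots + c_{j-1})$, and this value lies in $\mathbb{F}_q^*$ exactly when the partial sum $c_1 + \cdots + c_{j-1}$ is nonzero. Hence $Z_j = N_{j-1} = (q-1)^{j-1} - Z_{j-1}$, with base case $Z_1 = 0$. Solving this first-order linear recurrence (a particular solution proportional to $(q-1)^j$ together with a homogeneous solution proportional to $(-1)^j$) gives $Z_j = \frac{1}{q}[(q-1)^j + (q-1)(-1)^j]$, whereupon $N_j = (q-1)^j - Z_j$ simplifies to $\frac{q-1}{q}[(q-1)^j - (-1)^j]$, which is exactly (\ref{mot1}). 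A clean alternative to the recurrence is an additive-character computation: fixing a nontrivial additive character $\psi$ of $\mathbb{F}_q$, orthogonality yields $Z_j = \frac{1}{q} \sum_{t \in \mathbb{F}_q} \big( \sum_{c \in \mathbb{F}_q^*} \psi(tc) \big)^j$, where the inner sum equals $q-1$ for $t = 0$ and equals $-1$ for each of the $q-1$ nonzero $t$, giving the same closed form at once.

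I do not expect a genuine obstacle here; the content is a routine finite count. The only points that require care are checking that coordinatewise multiplication by $a$ is a bijection of $(\mathbb{F}_q^*)^j$ (which is precisely what the hypothesis $a \in (\mathbb{F}_q^*)^j$ guarantees) and treating the base case $Z_1 = 0$ in the recurrence, or the separate $t = 0$ term in the character sum, correctly.
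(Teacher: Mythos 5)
Your proof is correct and is essentially the paper's own approach: the paper merely asserts that the result follows ``by induction on $j$,'' and your recurrence $Z_j = N_{j-1} = (q-1)^{j-1} - Z_{j-1}$ with base case $Z_1 = 0$ is precisely that induction carried out in detail, with the preliminary bijection $b \mapsto (a_1b_1,\ldots,a_jb_j)$ cleanly justifying why the count is independent of $a$. The additive-character computation you sketch as an alternative is also valid, but the main argument already matches the paper's intended proof.
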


\begin{proof}
The proof is straightforward by using induction on $j$.
\end{proof}

\begin{lemma}\label{lm1cot}
Suppose that $n_1 + n_2 + \cdots + n_h = M$, where $M$ is a constant and $n_c$ $(c=1,2,\ldots,h)$ are nonnegative integers. Then the sum
\begin{eqnarray}
\sum_{c<d} n_c n_d
\end{eqnarray}
is maximum if and only if $|n_c - n_d| \leq 1$ for all $c \not = d$.
\end{lemma}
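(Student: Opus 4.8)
The plan is to convert the maximization of $\sum_{c<d} n_c n_d$ into the equivalent problem of minimizing the sum of squares $\sum_c n_c^2$. First I would invoke the elementary identity
\begin{eqnarray}
\left(\sum_{c=1}^h n_c\right)^2 = \sum_{c=1}^h n_c^2 + 2\sum_{c<d} n_c n_d,
\end{eqnarray}
which, since $\sum_c n_c = M$ is fixed, rearranges to
\begin{eqnarray}
\sum_{c<d} n_c n_d = \frac{1}{2}\left(M^2 - \sum_{c=1}^h n_c^2\right).
\end{eqnarray}
Thus $\sum_{c<d} n_c n_d$ is maximum exactly when $\sum_c n_c^2$ is minimum, and it suffices to show that the sum of squares is minimized precisely at the \emph{balanced} configurations, namely those with $|n_c - n_d| \le 1$ for all $c \neq d$.

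For the forward direction I would argue by contraposition, using a single smoothing (exchange) step. Suppose a configuration is not balanced, so there exist indices with $n_c \ge n_d + 2$. Replacing the pair $(n_c, n_d)$ by $(n_c - 1,\, n_d + 1)$ preserves the total $M$, as well as nonnegativity and integrality, while
\begin{eqnarray}
\left[(n_c-1)^2 + (n_d+1)^2\right] - \left[n_c^2 + n_d^2\right] = -2(n_c - n_d) + 2 \le -2 < 0.
\end{eqnarray}
Hence the sum of squares strictly decreases, so the original configuration neither minimizes $\sum_c n_c^2$ nor maximizes $\sum_{c<d} n_c n_d$. This shows that every maximizing configuration must be balanced.

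For the converse I would observe that a balanced configuration is essentially unique. Writing $M = th + r$ with $t = \lfloor M/h \rfloor$ and $0 \le r < h$, any configuration satisfying $|n_c - n_d| \le 1$ must consist of exactly $r$ entries equal to $t+1$ and $h-r$ entries equal to $t$: the values take at most two consecutive integers, and the number of larger entries is forced by the fixed total $M$. Consequently all balanced configurations share the same value of $\sum_c n_c^2$, hence the same value of $\sum_{c<d} n_c n_d$. Since the index set is finite a maximizer exists, and by the forward direction it is balanced; therefore the common balanced value equals the maximum, and every balanced configuration attains it.

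I expect no serious obstacle here, since the identity reduces everything to the standard fact that sums of squares are minimized by equalizing the parts. The one point requiring care is packaging the two directions into the stated ``if and only if'': the exchange step alone gives only that maximizers are balanced, so the converse genuinely relies on the uniqueness-of-value observation above rather than on any further inequality.
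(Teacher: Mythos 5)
Your proof is correct, and its engine is the same single smoothing step the paper uses; the difference is in packaging and in completeness. The paper applies the exchange $(n_{c_0},n_{d_0})\mapsto(n_{c_0}-1,n_{d_0}+1)$ directly to the pair sum, computing $\sum_{c<d}n'_cn'_d-\sum_{c<d}n_cn_d=n_{c_0}-n_{d_0}-1>0$, whereas you first pass through the identity $\bigl(\sum_c n_c\bigr)^2=\sum_c n_c^2+2\sum_{c<d}n_cn_d$ and smooth the sum of squares instead; since $M$ is fixed these are the same computation in different clothing, so that detour buys nothing essential (though it connects the lemma to the familiar fact that sums of squares are minimized by equalizing parts). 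Where you genuinely go beyond the paper is the converse: the paper's proof establishes only that a non-balanced configuration is not maximal (i.e., maximizers are balanced) and stops, leaving the ``if'' direction implicit. You close that gap correctly by observing that any balanced configuration is forced to have exactly $r$ entries equal to $t+1$ and $h-r$ entries equal to $t$ where $M=th+r$, so all balanced configurations share one common value, and since the (finite) solution set contains a maximizer which must be balanced, that common value is the maximum. This extra step is exactly what the stated ``if and only if'' requires, and your final remark correctly identifies that the exchange argument alone does not deliver it.
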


\begin{proof}
Suppose that there exist $c_0 \not = d_0$ (not necessary $c_0 < d_0$) such that $n_{c_0} - n_{d_0} >1$. Let $n'_e = n_e$ if $e \not \in \{c_0,d_0\}$, $n'_{c_0} = n_{c_0} -1$, and $n'_{d_0} = n_{d_0} +1$. Then $n'_1 + n'_2 + \cdots + n'_h = M$ and
\begin{eqnarray}
\nonumber \sum_{c<d}n'_c n'_d - \sum_{c<d}n_cn_d & =& (n_{c_0} -1)(n_{d_0}+1) - n_{c_0}n_{d_0}\\
& = &  n_{c_0} - n_{d_0} - 1 > 0.
\end{eqnarray}
\end{proof}

Let $\mathcal C$ be a $q$-ary code of length $n$ with distance distribution $\{B_i\}_{i=0}^n$ and let $M = |\mathcal C|$. Consider $\mathcal C$ as an $M \times n$ matrix (where each $c \in \mathcal C$ is a row). The $m$th row of $\mathcal C$ is denoted by $(c_{m1}, c_{m2}, \ldots, c_{mn})$, $m = 1, 2, \ldots, M$. Let $u'_1, u'_2, \ldots, u'_n$ be the $n$ columns of $\mathcal C$. Each $u'_i$ can be considered as a vector in $\mathbb{F}_q^M$. Write $\mathbb{F}_q = \{0=\omega_1, \omega_2, \ldots, \omega_q\}$. For each $c =1, 2, \ldots, q$ and each vector $a = (a_1, a_2, \ldots, a_M) \in \mathbb{F}_q^M$, denote
\begin{eqnarray}
x_c(a) = |\{ j \mid a_j = \omega_c\}|.
\end{eqnarray}
By definition, $\sum_{c=1}^q x_c(a) = M$. For each $k = 1, 2, \ldots, n$, let
\begin{eqnarray}\label{tm10}
S(k)=\sum_{\substack{\alpha \in (\mathbb{F}_q^*)^k \\ i_1< \cdots < i_k}} \sum_{\substack{c<d}} x_c(\alpha_1 u'_{i_1} + \cdots + \alpha_k u'_{i_k}) \cdot x_d(\alpha_1 u'_{i_1} + \cdots + \alpha_k u'_{i_k}).
\end{eqnarray}

\begin{lemma}\label{qt1}
Suppose that $\mathcal C$ is a $q$-ary code of length $n$ with size $M$. Then for each $k = 1, 2, \ldots, n$,
\begin{eqnarray}
S(k) \leq (q-1)^k \left(n \atop k \right) \left[ \frac{q-1}{2q} M^2 + \frac{r(r-q)}{2q} \right],
\end{eqnarray}
where $r$ is the remainder when dividing $M$ by $q$.
\end{lemma}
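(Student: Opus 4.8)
The plan is to bound the inner double sum pointwise, uniformly over every choice of $\alpha$ and of the indices $i_1 < \cdots < i_k$, and then merely count the number of terms in the outer sum. First I would fix $\alpha = (\alpha_1, \ldots, \alpha_k) \in (\mathbb{F}_q^*)^k$ together with indices $i_1 < \cdots < i_k$, and set $a = \alpha_1 u'_{i_1} + \cdots + \alpha_k u'_{i_k} \in \mathbb{F}_q^M$. Since each of the $M$ coordinates of $a$ equals exactly one field element $\omega_c$, the numbers $x_1(a), \ldots, x_q(a)$ are nonnegative integers with $\sum_{c=1}^q x_c(a) = M$. Rewriting $\sum_{c<d} x_c(a) x_d(a) = \tfrac{1}{2}\bigl(M^2 - \sum_{c=1}^q x_c(a)^2\bigr)$, I would apply Lemma \ref{lm1cot} with $h = q$: the quantity $\sum_{c<d} x_c(a) x_d(a)$ is maximized precisely at the most balanced partition of $M$ into $q$ parts.

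Next I evaluate this maximum in closed form. Write $M = qs + r$ with $0 \leq r \leq q-1$. The balanced partition consists of $r$ parts equal to $s+1$ and $q-r$ parts equal to $s$, so that $\sum_{c=1}^q x_c^2 = r(s+1)^2 + (q-r)s^2$. A short manipulation using $qs = M - r$ reduces this to $\sum_{c=1}^q x_c^2 = (M^2 - r^2 + qr)/q$, whence
\begin{eqnarray}
\sum_{c<d} x_c(a) x_d(a) \leq \frac{1}{2}\left(M^2 - \frac{M^2 - r^2 + qr}{q}\right) = \frac{q-1}{2q} M^2 + \frac{r(r-q)}{2q}.
\end{eqnarray}
The essential point is that this estimate is uniform: it depends neither on $\alpha$ nor on the chosen indices $i_1 < \cdots < i_k$.

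Finally, the outer sum in (\ref{tm10}) ranges over $\alpha \in (\mathbb{F}_q^*)^k$ and over tuples $i_1 < \cdots < i_k$, giving $(q-1)^k \binom{n}{k}$ terms in total; bounding each term by the uniform estimate above yields the claim at once. I do not expect a genuine obstacle here, since all the structural content is carried by Lemma \ref{lm1cot}. The only real work is the arithmetic simplification of $\sum_{c=1}^q x_c^2$ at the balanced partition into the stated form, which amounts to keeping the bookkeeping of $M = qs + r$ straight and verifying that the maximal value of the inner sum matches the bracketed expression exactly.
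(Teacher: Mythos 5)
Your proposal is correct and follows essentially the same route as the paper: fix $\alpha$ and $i_1 < \cdots < i_k$, use Lemma \ref{lm1cot} to identify the balanced partition of $M$ into $q$ parts as the maximizer of the inner sum, evaluate the maximum in closed form, and multiply by the $(q-1)^k \binom{n}{k}$ terms of the outer sum. The only (cosmetic) difference is that you compute the maximum via the identity $\sum_{c<d} x_c x_d = \tfrac{1}{2}\bigl(M^2 - \sum_{c} x_c^2\bigr)$, whereas the paper expands it directly as $\binom{q-r}{2}s^2 + (q-r)r\,s(s+1) + \binom{r}{2}(s+1)^2$; both simplify to the same bound.
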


\begin{proof}
Write $M = sq + r$, where $s$ is an integer and $0 \leq r < q$. For each $\alpha \in (\mathbb{F}_q^*)^k$ and each relation $i_1< i_2 < \cdots < i_k$, we always have
\begin{eqnarray}
\sum_{c=1}^q x_c(\alpha_1 u'_{i_1} + \cdots + \alpha_k u'_{i_k}) = M.
\end{eqnarray}
By Lemma \ref{lm1cot}, the sum
\begin{eqnarray}
\sum_{c<d} x_c(\alpha_1 u'_{i_1} + \cdots + \alpha_k u'_{i_k}) \cdot x_d(\alpha_1 u'_{i_1} + \cdots + \alpha_k u'_{i_k})
\end{eqnarray}
is maximum when
\begin{eqnarray}
x_e(\alpha_1 u'_{i_1} + \cdots + \alpha_k u'_{i_k}) = s
\end{eqnarray}
for $q-r$ values of $e$ and
\begin{eqnarray}
x_e(\alpha_1 u'_{i_1} + \cdots + \alpha_k u'_{i_k}) = s+1
\end{eqnarray}
for the other $r$ values of $e$. This means
\begin{eqnarray}
\nonumber && \sum_{c<d} x_c(\alpha_1 u'_{i_1} + \cdots + \alpha_k u'_{i_k})\cdot x_d(\alpha_1 u'_{i_1} + \cdots + \alpha_k u'_{i_k})\\
\nonumber & \leq & \left(q-r \atop 2\right) s^2 + (q-r)r s(s+1) + \left(r \atop 2 \right) (s+1)^2\\
\nonumber & = & \left(q-r \atop 2\right) \left(\frac{M-r}{q}\right)^2 + (q-r)r \frac{M-r}{q}\left(\frac{M-r}{q}+1\right) + \left(r \atop 2 \right) \left(\frac{M-r}{q}+1\right)^2\\
\nonumber & = & \frac{q-1}{2q} M^2 + \frac{r(r-q)}{2q}.
\end{eqnarray}
The result then follows since $|(\mathbb{F}_q^*)^k| = (q-1)^k$ and there are exactly $\left( n \atop k\right)$ choices for $i_1 < i_2 < \cdots < i_k$.
\end{proof}

For each $k = 1, 2, \ldots, n$, we introduce the polynomials
\begin{eqnarray}
P_k^-(n;x) = \frac{1}{2} \sum_{j=0}^k [(q-1)^j - (-1)^j] (q-1)^{k-j}\left(x \atop j \right) \left(n-x \atop k-j\right)
\end{eqnarray}
and
\begin{eqnarray}
P_k^+(n;x) = (q-1)^k \left(n \atop k\right) - P_k^-(n;x).
\end{eqnarray}

\begin{remark}
{\rm It follows from definition that $P_k^+(n;x) + P_k^-(n;x) = (q-1)^k \left(n \atop k\right)$ and $P_k^+(n;x) - P_k^-(n;x) = \sum_{j=0}^k (-1)^j (q-1)^{k-j}\left(x \atop j \right) \left(n-x \atop k-j\right) = P_k(n;x)$ is the Krawtchouk polynomial.}
\end{remark}

\begin{lemma}\label{qt2}
Suppose that $\mathcal C$ is a $q$-ary code of length $n$ with size $M$ and distance distribution $\{B_i\}_{i=0}^n$. Then for each $k = 1, 2, \ldots, n$,
\begin{eqnarray}
\sum_{i=1}^n P_k^-(n;i) B_i = \frac{q}{(q-1)M}S(k)
\end{eqnarray}
and
\begin{eqnarray}
\sum_{i=1}^n P_k^+(n;i) B_i = (M-1)(q-1)^k \left(n \atop k \right) - \frac{q}{(q-1)M}S(k).
\end{eqnarray}
\end{lemma}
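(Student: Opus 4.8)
The plan is to evaluate $S(k)$ by double counting, reorganizing the triple sum in (\ref{tm10}) so that the pair of rows is the outermost index. First I would fix a single vector $a = (a_1,\dots,a_M) \in \mathbb{F}_q^M$ and observe that
\begin{eqnarray}
\sum_{c<d} x_c(a)\,x_d(a) = \frac{1}{2}\Big[ M^2 - \sum_{c=1}^q x_c(a)^2 \Big]
\end{eqnarray}
equals the number of unordered pairs $\{m,m'\}$ of distinct coordinate positions with $a_m \neq a_{m'}$, since $\sum_c x_c(a)^2$ counts the ordered pairs on which $a$ is constant. Applying this with $a = \alpha_1 u'_{i_1} + \cdots + \alpha_k u'_{i_k}$, whose $m$-th coordinate is $\alpha \cdot w_m$ for $w_m = (c_{m i_1}, \dots, c_{m i_k})$ the restriction of the $m$-th row of $\mathcal{C}$ to the columns $i_1,\dots,i_k$, the condition $a_m \neq a_{m'}$ becomes $\alpha \cdot (w_m - w_{m'}) \neq 0$.

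Next I would interchange the order of summation to obtain
\begin{eqnarray}
S(k) = \sum_{\{m,m'\}} \ \sum_{i_1<\cdots<i_k} \ \sum_{\alpha \in (\mathbb{F}_q^*)^k} \big[\, \alpha \cdot (w_m - w_{m'}) \neq 0 \,\big],
\end{eqnarray}
where the outer sum runs over unordered pairs of distinct rows. Fix such a pair at Hamming distance $i$, with $D$ the set of $i$ positions where the two rows differ, and fix a column set $I = \{i_1,\dots,i_k\}$; write $t = |I \cap D|$, so $w_m - w_{m'}$ has exactly $t$ nonzero entries among the chosen columns. The $k-t$ coordinates of $\alpha$ off the support contribute a factor $(q-1)^{k-t}$, while Proposition \ref{prop21} applied to the length-$t$ support gives $\frac{q-1}{q}[(q-1)^t - (-1)^t]$ admissible choices on the support; hence the inner $\alpha$-sum equals $\frac{q-1}{q}[(q-1)^k - (q-1)^{k-t}(-1)^t]$, the case $t=0$ correctly yielding zero.

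The heart of the argument is the sum over column sets. Since there are $\binom{i}{t}\binom{n-i}{k-t}$ choices of $I$ with $|I \cap D| = t$, Vandermonde's identity $\sum_t \binom{i}{t}\binom{n-i}{k-t} = \binom{n}{k}$ handles the first part, and the remaining part is, by the very definition of the Krawtchouk polynomial,
\begin{eqnarray}
\sum_{t=0}^k (-1)^t (q-1)^{k-t}\binom{i}{t}\binom{n-i}{k-t} = P_k(n;i).
\end{eqnarray}
Using $(q-1)^k\binom{n}{k} - P_k(n;i) = 2 P_k^-(n;i)$, the total contribution of a pair at distance $i$ collapses to $\frac{2(q-1)}{q} P_k^-(n;i)$. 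This recognition step — matching the signed binomial sum with $P_k(n;i)$ and then re-expressing via $P_k^-$ — is where I expect the bookkeeping to be most delicate.

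Finally I would sum over pairs using the distance distribution: for $i \geq 1$ the number of unordered pairs of codewords at distance $i$ is $\frac{1}{2} M B_i$, so
\begin{eqnarray}
S(k) = \sum_{i=1}^n \frac{1}{2} M B_i \cdot \frac{2(q-1)}{q} P_k^-(n;i) = \frac{(q-1)M}{q} \sum_{i=1}^n P_k^-(n;i) B_i,
\end{eqnarray}
which rearranges to the first identity. The second identity then follows at once from $P_k^+(n;x) = (q-1)^k\binom{n}{k} - P_k^-(n;x)$ together with $\sum_{i=1}^n B_i = M - 1$, the latter because $\sum_{i=0}^n B_i = M$ and $B_0 = 1$.
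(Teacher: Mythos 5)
Your proposal is correct and is essentially the paper's own double-counting proof: the paper's auxiliary count $S_1(k)$ is precisely your indicator sum taken over ordered rather than unordered pairs of rows (so $S_1(k) = 2S(k)$), and both arguments apply Proposition \ref{prop21} to the support of the restricted difference vector and then sum over column sets in the same way. The only cosmetic difference is that the paper reads off $\frac{2(q-1)}{q}P_k^-\bigl(n;d(u,v)\bigr)$ directly from the definition of $P_k^-$, whereas you take a short detour through Vandermonde's identity and the Krawtchouk polynomial before converting back via $(q-1)^k\binom{n}{k} - P_k(n;i) = 2P_k^-(n;i)$.
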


\begin{proof}
Write $\mathcal C = (c_{mi})$. Let $S_1(k)$ be the number of pairs $(A,\alpha)$ satisfying the following conditions.
\begin{itemize}
\item [(i)] $A$ is a $2 \times k$ matrix
\begin{eqnarray}
A = \left(
\begin{array}{cccc}
c_{mi_1} & c_{mi_2} & \cdots & c_{mi_k} \\
c_{li_1} & c_{li_2} & \cdots & c_{li_k}
\end{array}
\right)
\end{eqnarray}
such that $m \not = l $ and $i_1 < i_2 < \cdots < i_k$.
\item [(ii)] $\alpha = (\alpha_1, \alpha_2, \ldots, \alpha_k)$ is an element in $(\mathbb{F}_q^*)^k$ such that
\begin{eqnarray}
\alpha_1 (c_{mi_1}-c_{li_1}) + \cdots + \alpha_k (c_{mi_k}-c_{li_k}) \not = 0.
\end{eqnarray}
\end{itemize}
For two row $u = (c_{m1}~c_{m2} \cdots c_{mn})$ and $v = (c_{l1}~c_{l2} \cdots c_{ln})$, we first choose a set $I_1$ containing $j$ coordinates ($0 \leq j \leq d(u,v)$) where $u$ and $v$ differ and choose another set $I_2$ containing $k-j$ coordinates where $u$ and $v$ are the same. Let $\{i_1, i_2, \ldots, i_k\}$ be such that $\{i_1, i_2, \ldots, i_k\}=I_1 \cup I_2$ and $i_1 < i_2 < \cdots < i_k$. For each $j$, there are exactly
$\left(d(u,v) \atop j \right) \left(n-d(u,v) \atop k-j \right)$
choices for such $i_1 < i_2 < \cdots < i_k$. Now fix $u, v$, $j$, and $i_1 < i_2 < \cdots < i_k$. We count the number of $\alpha = (\alpha_1, \alpha_2, \ldots, \alpha_k) \in (\mathbb{F}_q^*)^k$ such that
$\alpha_1 (c_{mi_1}-c_{li_1}) + \cdots + \alpha_k (c_{mi_k}-c_{li_k}) \not = 0$
or
\begin{eqnarray}\label{hmuoi20}
\sum_{t \in I_1} \alpha_t (c_{mi_t} - c_{li_t}) \not = 0.
\end{eqnarray}
By Proposition \ref{prop21}, there are exactly $\frac{q-1}{q} [(q-1)^j - (-1)^j]$ choices for $(\alpha_t)_{i_t \in I_1}$ such that (\ref{hmuoi20}) holds. Since $(\alpha_t)_{i_t \in I_2}$ can be chosen arbitrarily, we get in total
\begin{eqnarray}
\frac{q-1}{q} [(q-1)^j - (-1)^j] \cdot (q-1)^{k-j}
\end{eqnarray}
choices for $\alpha$. In conclusion,
\begin{eqnarray}\label{mm}
\nonumber S_1(k) & = & \sum_{\substack{u, v \in \mathcal C\\u \not = v}} \sum_{j=0}^{d(u,v)} \frac{q-1}{q} [(q-1)^j - (-1)^j](q-1)^{k-j}\left(d(u,v) \atop j \right) \left(n-d(u,v) \atop k-j \right)\\
\nonumber& = & \sum_{\substack{u, v \in \mathcal C\\u \not = v}} \frac{2(q-1)}{q}P_k^-(n; d(u,v))\\
\nonumber& = & \frac{2(q-1)}{q} \sum_{i=1}^n \sum_{\substack{u, v \in \mathcal C\\d(u,v)=i}} P_k^-(n; i)\\
& = & \frac{2(q-1)M}{q} \sum_{i=1}^n P_k^-(n;i) B_i.
\end{eqnarray}
Now for each $\alpha = (\alpha_1, \alpha_2, \ldots, \alpha_k) \in (\mathbb{F}_q^*)^k$ and $k$ columns $u'_{i_1}, u'_{i_2}, \ldots, u'_{i_k}$ $(i_1 < i_2 < \cdots < i_k)$, there are
\begin{eqnarray}
\sum_{c<d} 2 x_c(\alpha_1 u'_{i_1} + \cdots + \alpha_k u'_{i_k}) \cdot x_d(\alpha_1 u'_{i_1} + \cdots + \alpha_k u'_{i_k})
\end{eqnarray}
choices for $m\not = l$ such that
$\alpha_1 c_{mi_1} + \cdots + \alpha_k c_{mi_k} \not = \alpha_1 c_{li_1} + \cdots + \alpha_k c_{li_k}$.
It follows that
\begin{eqnarray}\label{hh}
\nonumber S_1(k) & = & \sum_{\substack{\alpha \in (\mathbb{F}_q^*)^k\\ i_1 < \cdots < i_k}} \sum_{\substack{c<d }}2 x_c(\alpha_1 u'_{i_1} + \cdots + \alpha_k u'_{i_k}) \cdot x_d(\alpha_1 u'_{i_1} + \cdots + \alpha_k u'_{i_k})\\
& = & 2S(k).
\end{eqnarray}
(\ref{mm}) and (\ref{hh}) give
\begin{eqnarray}
\sum_{i=1}^n P_k^-(n;i)B_i = \frac{q}{(q-1)M}S(k).
\end{eqnarray}
Finally,
\begin{eqnarray}
\nonumber \sum_{i=1}^n P_k^+(n;i)B_i & = & \sum_{i=1}^n \left[ (q-1)^k \left(n \atop k \right) - P_k^-(n;i) \right] B_i\\
\nonumber & = & \sum_{i=1}^n (q-1)^k \left( n \atop k \right) B_i - \sum_{i=1}^n P_k^-(n;i) B_i\\
& = & (M-1)(q-1)^k \left(n \atop k \right) - \frac{q}{(q-1)M}S(k).
\end{eqnarray}
\end{proof}


\begin{theorem}(Improved Delsarte inequalities).\label{delthm}
Suppose that $\mathcal C$ is a $q$-ary code of length $n$ with size $M$ and distance distribution $\{B_i\}_{i=0}^n$. If $r$ is the remainder when dividing $M$ by $q$, then for each $k = 1, 2, \ldots, n$,
\begin{eqnarray}
\sum_{i=0}^n P_k(n;i) B_i \geq \frac{1}{M} r(q-r) (q-1)^{k-1} \left(n \atop k\right).
\end{eqnarray}
\end{theorem}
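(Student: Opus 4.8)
The plan is to assemble the desired inequality from the three ingredients already prepared: the Krawtchouk decomposition $P_k(n;x) = P_k^+(n;x) - P_k^-(n;x)$ from the Remark, the two evaluations of $\sum_{i=1}^n P_k^{\pm}(n;i) B_i$ in Lemma \ref{qt2}, and the upper bound on $S(k)$ in Lemma \ref{qt1}. First I would isolate the $i=0$ term, since the formulas in Lemma \ref{qt2} only cover $i \geq 1$. Because $d(u,v) = 0$ exactly when $u = v$, there are $M$ such ordered pairs, so $B_0 = 1$. Evaluating the Krawtchouk polynomial at $x=0$ gives $P_k(n;0) = (q-1)^k \binom{n}{k}$, since $\binom{0}{j} = 0$ for $j \geq 1$ and only the $j=0$ summand survives; hence the $i=0$ contribution is exactly $(q-1)^k \binom{n}{k}$.

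Next I would handle the remaining sum. Writing $P_k = P_k^+ - P_k^-$ and subtracting the two identities of Lemma \ref{qt2} yields
\[
\sum_{i=1}^n P_k(n;i) B_i = (M-1)(q-1)^k \binom{n}{k} - \frac{2q}{(q-1)M} S(k).
\]
Adding back the $i=0$ term promotes $(M-1)$ to $M$, giving
\[
\sum_{i=0}^n P_k(n;i) B_i = M(q-1)^k \binom{n}{k} - \frac{2q}{(q-1)M} S(k).
\]

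Finally, since $S(k)$ enters with a negative coefficient, substituting the upper bound from Lemma \ref{qt1} produces a lower bound on the left side. The leading $\frac{q-1}{2q}M^2$ term inside the bracket is calibrated to cancel the $M(q-1)^k\binom{n}{k}$ term exactly, and what survives is $-\frac{(q-1)^k\binom{n}{k}}{(q-1)M}\, r(r-q) = \frac{1}{M} r(q-r)(q-1)^{k-1}\binom{n}{k}$, using $r(r-q) = -r(q-r)$. This is precisely the claimed inequality.

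The computation is essentially routine once these pieces are in place; there is no substantial obstacle beyond careful bookkeeping. The two points meriting attention are the separate treatment of the $i=0$ term, which the earlier lemmas deliberately omit from their summation range, and the sign tracking in the final substitution, where an upper bound on the negatively-weighted quantity $S(k)$ must correctly yield the desired lower bound. I would verify the exact cancellation of the two leading $M(q-1)^k\binom{n}{k}$ terms with extra care, as that is where any arithmetic slip would first appear.
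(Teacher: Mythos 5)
Your proposal is correct and follows essentially the same route as the paper's proof: the decomposition $P_k = P_k^+ - P_k^-$, the two identities of Lemma \ref{qt2}, the bound on $S(k)$ from Lemma \ref{qt1}, and the observation that $P_k(n;0)B_0 = (q-1)^k\binom{n}{k}$. The only cosmetic difference is that the paper bounds $-\sum_{i=1}^n P_k(n;i)B_i$ from above and folds in the $i=0$ term at the end, whereas you absorb it first and bound $\sum_{i=0}^n P_k(n;i)B_i$ from below directly; the cancellation of the $M(q-1)^k\binom{n}{k}$ terms you flag is exactly the arithmetic the paper performs.
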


\begin{proof}
By Lemmas \ref{qt2} and \ref{qt1},
\begin{eqnarray}
\nonumber - \sum_{i=1}^n P_k(n;i)B_i & = & \sum_{i=1}^n P_k^-(n;i)B_i - \sum_{i=1}^n P_k^+(n;i)B_i\\
\nonumber & = & -(M-1)(q-1)^k \left(n \atop k \right) + \frac{2q}{(q-1)M}S(k)\\
\nonumber & \leq & -(M-1)(q-1)^k \left(n \atop k \right) \\
\nonumber && + \frac{2q}{(q-1)M}(q-1)^k \left(n \atop k \right) \left[ \frac{q-1}{2q} M^2 + \frac{r(r-q)}{2q} \right]\\
&=& (q-1)^k \left(n \atop k \right) - \frac{1}{M} r(q-r)(q-1)^{k-1}\left(n \atop k \right).
\end{eqnarray}
Since $P_k(n;0) = (q-1)^k \left(n \atop k \right)$ and $B_0=1$, the above inequality gives the desired result.
\end{proof}

\section{Inequalities for $q$-Ary Constant-Weight Codes}\label{ss3}

Let $\mathcal C$ be a $q$-ary constant-weight code of length $n$ and constant-weight $w$. Let $M = |\mathcal C|$. As before, consider $\mathcal C$ as an $M \times n$ matrix (where each $c \in \mathcal C$ is a row). The $m$th row of $\mathcal C$ is denoted by $(c_{m1}, c_{m2}, \ldots, c_{mn})$, $m = 1, 2, \ldots, M$. Let $u'_1, u'_2, \ldots, u'_n$ be the $n$ columns of $\mathcal C$. Write $\mathbb{F}_q = \{0=\omega_1, \omega_2, \ldots, \omega_q\}$.

\begin{lemma}\label{g1}
For each $k = 1, 2, \ldots, n$,
\begin{eqnarray}
\sum_{\substack{\alpha \in (\mathbb{F}_q^*)^k \\ i_1 < \cdots < i_k}} \sum_{\substack{2 \leq c \leq q}} x_c(\alpha_1 u'_{i_1} + \cdots + \alpha_k u'_{i_k}) = \frac{2(q-1)}{q} M P_k^-(n;w).
\end{eqnarray}
\end{lemma}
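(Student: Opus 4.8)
The plan is to reinterpret the left-hand side as a double count and then evaluate it codeword-by-codeword, exactly as in the proof of Lemma \ref{qt2} but now comparing each codeword against the zero vector rather than comparing pairs of distinct codewords. First I would observe that, since $\omega_1 = 0$, the inner sum $\sum_{2 \le c \le q} x_c(a)$ counts the number of coordinates of the vector $a \in \mathbb{F}_q^M$ that are nonzero. Applying this with $a = \alpha_1 u'_{i_1} + \cdots + \alpha_k u'_{i_k}$, whose $m$th coordinate is $\alpha_1 c_{mi_1} + \cdots + \alpha_k c_{mi_k}$, the quantity $\sum_{2 \le c \le q} x_c(\alpha_1 u'_{i_1} + \cdots + \alpha_k u'_{i_k})$ equals the number of rows $m$ for which $\alpha_1 c_{mi_1} + \cdots + \alpha_k c_{mi_k} \neq 0$. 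Hence the whole left-hand side is the number of triples $(\{i_1 < \cdots < i_k\}, \alpha, m)$ with $\alpha \in (\mathbb{F}_q^*)^k$ and $\sum_{t=1}^k \alpha_t c_{mi_t} \neq 0$.

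Next I would fix a row $m$, i.e.\ a codeword $u = (c_{m1}, \ldots, c_{mn})$, which has exactly $w$ nonzero and $n-w$ zero coordinates because the code has constant weight $w$, and count the admissible pairs $(\{i_t\}, \alpha)$. Stratifying by the number $j$ of the chosen positions $i_t$ that fall on the support of $u$, there are $\binom{w}{j}\binom{n-w}{k-j}$ ways to pick the positions; the $k-j$ values $\alpha_t$ sitting over zero coordinates are free, contributing a factor $(q-1)^{k-j}$, while the remaining $j$ values must be chosen so that $\sum_{t : c_{mi_t} \neq 0} \alpha_t c_{mi_t} \neq 0$. Since the coordinates $c_{mi_t}$ appearing here lie in $\mathbb{F}_q^*$, this last count is exactly $N(a) = \frac{q-1}{q}[(q-1)^j - (-1)^j]$ by Proposition \ref{prop21}, independent of the particular nonzero field values involved. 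Note that the case $j=0$ correctly contributes nothing, matching the fact that the factor $(q-1)^0 - (-1)^0$ vanishes and that a combination supported entirely off the support of $u$ is identically zero.

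Summing over $j$ then gives, for the fixed codeword $u$,
\begin{eqnarray}
\nonumber \frac{q-1}{q} \sum_{j=0}^k [(q-1)^j - (-1)^j](q-1)^{k-j} \binom{w}{j}\binom{n-w}{k-j} = \frac{2(q-1)}{q} P_k^-(n;w),
\end{eqnarray}
where the last equality is merely the definition of $P_k^-(n;x)$ evaluated at $x = w$. Since this value does not depend on which codeword was fixed, summing over all $M$ rows multiplies it by $M$ and yields the claimed identity. The only point that really needs care is the recognition that the count over the on-support positions is governed by $N(a)$ and is therefore the same for every codeword regardless of which nonzero elements populate its support; once that uniformity is in hand, the rest is the same Krawtchouk bookkeeping already performed for $P_k^-$ in the proof of Lemma \ref{qt2}, so I do not anticipate a genuine obstacle beyond this identification.
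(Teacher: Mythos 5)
Your proposal is correct and follows essentially the same route as the paper: both double-count the triples $(m,\{i_1<\cdots<i_k\},\alpha)$ with $\alpha_1 c_{mi_1}+\cdots+\alpha_k c_{mi_k}\neq 0$, evaluating the per-codeword count by stratifying over the number $j$ of chosen positions in the support and invoking Proposition~\ref{prop21}. The only difference is presentational: the paper outsources the per-row count to the proof of Lemma~\ref{qt2} with $v$ taken to be the zero vector, whereas you write out the stratification explicitly (and in doing so implicitly correct the paper's typo $\binom{n-w}{j}$ for $\binom{n-w}{k-j}$ in its displayed sum).
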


\begin{proof}
Let $S_0(k)$ be the number of pairs $(B,\alpha)$ satisfying the following conditions.
\begin{itemize}
\item [(a)] $B = (c_{mi_1}~c_{mi_2} \cdots c_{mi_k})$, where $m = 1, 2, \ldots, M$ and $i_1 < i_2 < \cdots < i_k$.
\item [(b)] $\alpha = (\alpha_1, \alpha_2, \ldots, \alpha_k) \in (\mathbb{F}_q^*)^k$ such that
\begin{eqnarray}
\alpha_1 c_{mi_1} + \alpha_2 c_{mi_2} + \cdots + \alpha_k c_{mi_k} \not = 0.
\end{eqnarray}
\end{itemize}
Let $u = (c_{m1}~c_{m2} \cdots c_{mn})$ be a row of $\mathcal C$. In the proof of Lemma \ref{qt2}, if $v$ is the zero vector, then the number of $\alpha \in (\mathbb{F}_q^*)^k$ and $i_1 < i_2 < \cdots < i_k$ such that $\alpha_1 c_{mi_1} + \alpha_2 c_{mi_2} + \cdots + \alpha_k c_{mi_k} \not = 0$ is
\begin{eqnarray}
\sum_{j=0}^w \frac{q-1}{q}[(q-1)^j - (-1)^j] (q-1)^{k-j}\left(w \atop j \right) \left( n-w \atop j \right) = \frac{2(q-1)}{q} P_k^-(n;w).
\end{eqnarray}
Therefore,
\begin{eqnarray}\label{toan1}
S_0(k) = \frac{2(q-1)M}{q} P_k^-(n;w).
\end{eqnarray}
Now for each $\alpha = (\alpha_1, \alpha_2, \ldots, \alpha_k) \in (\mathbb{F}_q^*)^k$ and $k$ columns $u'_{i_1}, u'_{i_2}, \ldots, u'_{i_k}$ $(i_1 < i_2 < \cdots < i_k)$, there are exactly
\begin{eqnarray}
\sum_{2 \leq c \leq q} x_c(\alpha_1 u'_{i_1} + \cdots + \alpha_k u'_{i_k})
\end{eqnarray}
choices for $m$ such that $\alpha_1 c_{mi_1} + \alpha_2 c_{mi_2} + \cdots + \alpha_k c_{mi_k} \not = 0.$
Hence,
\begin{eqnarray}\label{toan2}
S_0(k) = \sum_{\substack{\alpha \in (\mathbb{F}_q^*)^k \\ i_1 < \cdots < i_k}} \sum_{\substack{2 \leq c \leq q}} x_c(\alpha_1 u'_{i_1} + \cdots + \alpha_k u'_{i_k}).
\end{eqnarray}
(\ref{toan1}) and (\ref{toan2}) gives the desired result.
\end{proof}

\begin{lemma}\label{lmmatran}
Suppose that $A = (n_{ci})$ is a $q \times N$ matrix with nonnegative entries such that
$\sum_{i=1}^N \sum_{c=2}^q n_{ci} = M'$ and $\sum_{c=1}^q n_{ci} = M$ for all $i = 1, 2, \ldots, N$, where $M'$ and $M$ are constants. Then the sum
\begin{eqnarray}
\sum_{i=1}^N \sum_{c<d} n_{ci}n_{di}
\end{eqnarray}
is maximum if and only if $|n_{1i} - n_{1j}| \leq 1$ for all $i \not = j$ and $|n_{ci}- n_{di}| \leq 1$ for all $i = 1, 2, \ldots, N$ and all $2 \leq c<d \leq q$.
\end{lemma}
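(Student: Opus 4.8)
The plan is to reduce everything to the single-constraint balancing already established in Lemma \ref{lm1cot}, after splitting off the distinguished first row. Write $s_i = \sum_{c=2}^q n_{ci} = M - n_{1i}$ for the sub-sum of column $i$ over rows $2,\ldots,q$, and decompose the objective by separating the pairs that involve row $1$ from those that do not:
\begin{eqnarray}
\sum_{i=1}^N \sum_{c<d} n_{ci}n_{di} = \sum_{i=1}^N n_{1i}(M-n_{1i}) + \sum_{i=1}^N \sum_{2\le c<d\le q} n_{ci}n_{di},
\end{eqnarray}
using $\sum_{d=2}^q n_{di}=M-n_{1i}$ in each column. Call the two summands (A) and (B). Since the feasible configurations form a finite set, a maximum exists; I will prove that each condition is \emph{necessary} (the ``only if'') by an exchange argument, and then prove \emph{sufficiency} (the ``if'') by showing that the two conditions pin down the value of the objective.

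For necessity I first handle the within-column balance. Fixing a column $i$ and a pair $2\le c<d\le q$ with $n_{ci}-n_{di}>1$, transferring one unit from $n_{ci}$ to $n_{di}$ leaves every column sum, the constant $M'$, and every first-row entry unchanged, yet strictly increases the inner sum of (B) for that column by exactly the quantity computed in Lemma \ref{lm1cot}. Hence at a maximum $|n_{ci}-n_{di}|\le1$ for all $i$ and all $2\le c<d\le q$. Assuming this, I then handle the first row: if $n_{1i}\ge n_{1j}+2$ for some $i\ne j$, I perform the compensated swap that lowers $n_{1i}$ by $1$ while raising $n_{c_0 i}$ by $1$ (for $c_0$ a minimizer of $n_{ci}$ over $c\ge 2$), and raises $n_{1j}$ by $1$ while lowering $n_{d_0 j}$ by $1$ (for $d_0$ a maximizer of $n_{dj}$ over $d\ge 2$). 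This preserves all column sums and leaves $M'$ unchanged, and nonnegativity holds because $n_{d_0 j}\ge s_j/(q-1)>0$. A direct computation gives the change
\begin{eqnarray}
\Delta = (n_{1i}-n_{1j})-2+(n_{d_0 j}-n_{c_0 i}),
\end{eqnarray}
and the already-established within-column balance forces $n_{d_0 j}-n_{c_0 i}\ge1$ whenever $s_j=M-n_{1j}\ge s_i+2$, so $\Delta\ge1>0$, contradicting maximality.

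For sufficiency, note that the first condition says the values $\{n_{1i}\}$ differ pairwise by at most $1$; since $\sum_i n_{1i}=NM-M'$ is fixed, this determines the multiset $\{n_{1i}\}$ completely, hence the multiset $\{s_i\}=\{M-n_{1i}\}$ and the value of (A). The second condition says that within each column the entries over rows $2,\ldots,q$ differ by at most $1$, so the inner sum of (B) for column $i$ equals the balanced value that depends only on $s_i$ (again via Lemma \ref{lm1cot}); summing over $i$ and using that $\{s_i\}$ is determined, the value of (B) is also fixed. Thus every configuration meeting both conditions attains the same objective value, which by the necessity part equals the maximum.

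The main obstacle is the coupling between the two conditions: balancing the first row across columns cannot be done by a single transfer, since that would violate the column-sum constraint, and the compensating moves inside rows $2,\ldots,q$ feed back into (B). The order of the argument is what resolves this—establishing within-column balance first makes the feedback term $n_{d_0 j}-n_{c_0 i}$ controllable (at least $1$), which is exactly what the borderline case $n_{1i}-n_{1j}=2$ requires.
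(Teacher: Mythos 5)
Your proof is correct, and it follows the same basic exchange-argument strategy as the paper, but with a genuinely different organization of the key step. Your within-column transfer is exactly the paper's second exchange (a one-unit transfer inside rows $2,\ldots,q$ of one column, gaining $n_{c_0i_0}-n_{d_0i_0}-1>0$ as in Lemma \ref{lm1cot}). For the row-$1$ imbalance, however, the paper makes the exchange self-contained by a pigeonhole choice: since the columns $i_0,j_0$ have equal sums and $n_{1i_0}>n_{1j_0}$, there is a \emph{single} row $c_0\geq 2$ with $n_{c_0j_0}\geq n_{c_0i_0}+1$, and using this same $c_0$ in both columns gives the gain $(n_{1i_0}-n_{1j_0})+(n_{c_0j_0}-n_{c_0i_0})-2>0$ with no hypothesis on the rest of the matrix. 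You instead compensate with a minimizing row in column $i$ and a maximizing row in column $j$, which forces your two-stage ordering: you must establish within-column balance first so that $n_{d_0j}-n_{c_0i}\geq 1$. That ordering is logically sound (at a maximum both exchanges are available, and your implicit floor/ceiling bound $\lceil s_j/(q-1)\rceil \geq \lfloor s_i/(q-1)\rfloor+1$, valid already for $s_j\geq s_i+1$, is correct), but the dependence is avoidable, and the paper's pigeonhole choice is the cleaner move. Two small points to tighten: your nonnegativity claim $n_{d_0j}\geq s_j/(q-1)>0$ tacitly needs $s_j>0$, which you should justify by $n_{1j}\leq n_{1i}-2\leq M-2$; and, like the paper, you implicitly assume the entries are integers (the statement says only ``nonnegative entries,'' but integrality is intended, as in Lemma \ref{lm1cot}). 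On the plus side, your sufficiency argument---that the two balance conditions determine the multiset $\{n_{1i}\}$ from its fixed sum $NM-M'$, hence the multiset $\{s_i\}$, hence each column's balanced pair-sum, so that all admissible configurations share one objective value, which by necessity is the maximum---is a genuine addition: the paper's proof, read literally, establishes only the ``only if'' direction of the stated equivalence, and your argument supplies the missing ``if.''
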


\begin{proof}
Suppose that there exist $i_0 \not = j_0$ such that $n_{1i_0} - n_{1j_0} > 1$. Since $\sum_{c=1}^q n_{ci_0} = M = \sum_{c=1}^q n_{cj_0}$, there exists $c_0$ $(2 \leq c_0 \leq q)$ such that $n_{c_0 j_0}-n_{c_0 i_0} \geq 1$. Let $A'= (n'_{ci})$ be the $q \times N$ matrix defined by $n'_{1i_0}=n_{1i_0}-1$, $n'_{c_0i_0}=n_{c_0i_0} + 1$, $n'_{1j_0}=n_{1j_0}+1$, $n'_{c_0j_0}=n_{c_0j_0} - 1$, and the other entries of $A'$ are the same as those of $A$. We have $\sum_{i=1}^N \sum_{c=2}^q n'_{ci} = M'$ and $\sum_{c=1}^q n'_{ci} = M$ for all $i = 1, 2, \ldots, N$. Furthermore,
\begin{eqnarray}
\nonumber \sum_{i=1}^N \sum_{c<d} n'_{ci}n'_{di} - \sum_{i=1}^N \sum_{c<d} n_{ci}n_{di} & = & \sum_{i \in \{i_0, j_0\}} \sum_{c<d} n'_{ci}n'_{di} - \sum_{i\in \{i_0, j_0\}}^N \sum_{c<d} n_{ci}n_{di}\\
\nonumber & = & \sum_{i \in \{i_0, j_0\}} n'_{1i}n'_{c_0i} - \sum_{i\in \{i_0, j_0\}}^N n_{1i}n_{c_0i}\\
\nonumber & = & n'_{1i_0}n'_{c_0i_0} + n'_{1j_0}n'_{c_0j_0}-n_{1i_0}n_{c_0i_0}-n_{1j_0}n_{c_0j_0}\\
\nonumber &=&(n_{1i_0}-1) (n_{c_0i_0}+1) + (n_{1j_0}+1)(n_{c_0j_0}-1) \\
\nonumber && -n_{1i_0}n_{c_0i_0}-n_{1j_0}n_{c_0j_0}\\
\nonumber & = &n_{1i_0} - n_{c_0i_0} - 1 - n_{1j_0} + n_{c_0 j_0} -1\\
\nonumber & = & (n_{1i_0} - n_{1j_0}) + (n_{c_0j_0} - n_{c_0i_0}) - 2 \\
& > & 1 + 1 - 2 = 0.
\end{eqnarray}
Similarly, suppose that there exist $i_0$ $(1 \leq i_0 \leq N)$ and $c_0 \not= d_0$ such that $c_0 \geq 2$, $d_0 \geq 2$, and $n_{c_0i_0} - n_{d_0i_0}>1$. Then let $A''$ be the matrix defined by $n''_{c_0i_0} = n_{c_0i_0} - 1$, $n''_{d_0i_0} = n_{d_0i_0} + 1$, and the other entries of $A''$ are the same as those of $A$. We have $\sum_{i=1}^N \sum_{c=2}^q n''_{ci} = M'$ and $\sum_{c=1}^q n''_{ci} = M$ for all $i = 1, 2, \ldots, N$. Furthermore,
\begin{eqnarray}
\nonumber \sum_{i=1}^N \sum_{c<d} n''_{ci}n''_{di} - \sum_{i=1}^N \sum_{c<d} n_{ci}n_{di} & = & \sum_{c<d} n''_{ci_0}n''_{di_0} - \sum_{c<d} n_{ci_0}n_{di_0}\\
\nonumber & = & n''_{c_0i_0}n''_{d_0i_0} - n_{c_0i_0}n_{d_0i_0}\\
\nonumber & = & (n_{c_0i_0}-1) (n_{d_0i_0}+1) - n_{c_0i_0}n_{d_0i_0}\\
& = &n_{c_0i_0} - n_{d_0i_0} - 1 > 0.
\end{eqnarray}
\end{proof}

Let $\mathcal C$ be a $q$-ary constant-weight code of length $n$ and constant-weight $w$. Let $M = |\mathcal C|$.
For each $k = 1, 2, \ldots, n$, denote
\begin{eqnarray}
T_1(k) = \left[(q-1)^k \left(n \atop k \right) - r_k \right](M-q_k)q_k + r_k(M-q_k-1)(q_k+1),
\end{eqnarray}
\begin{eqnarray}
\nonumber T_2(k) & = & \left[(q-1)^k \left( n \atop k \right) - r_k \right] \left[\left(q-1-t_k \atop 2 \right) s_k^2  \right.\\
&& \quad\quad\quad\quad\quad\quad \left.+  (q-1-t_k)t_ks_k (s_k+1) + \left(t_k \atop 2 \right) (s_k+1)^2 \right],
\end{eqnarray}
and
\begin{eqnarray}
T_3(k) =  r_k \left[ \left(q-1-t_k' \atop 2 \right) s_k'^2 + (q-1-t_k')t_k's_k'(s_k'+1) + \left(t_k' \atop 2 \right) (s_k'+1)^2 \right],
\end{eqnarray}
where
\begin{itemize}
\item [$\circ$] $q_k$ and $r_k$ are the quotient and the remainder, respectively, when dividing $\frac{2(q-1)M}{q} P_k^-(n;w)$ by $(q-1)^k \left( n \atop k \right)$,
\item [$\circ$] $s_k$ and $t_k$ are the quotient and the remainder, respectively, when dividing $q_k$ by $(q-1)$,
\item [$\circ$] $s_k'$ and $t_k'$ are the quotient and the remainder, respectively, when dividing $q_k+1$ by $(q-1)$.
\end{itemize}

\begin{lemma}\label{3t}
Suppose that $\mathcal C$ is a constant-weight code of length $n$ and constant-weight $w$. Let $S(k)$ be defined by (\ref{tm10}). Then for each $k = 1, 2, \ldots, n$,
\begin{eqnarray}
S(k) \leq T_1(k) + T_2(k) + T_3(k).
\end{eqnarray}
\end{lemma}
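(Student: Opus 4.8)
The plan is to realize $S(k)$ as the objective function appearing in Lemma \ref{lmmatran} and then read off its maximum. Index the pairs $(\alpha, \{i_1 < \cdots < i_k\})$ with $\alpha \in (\mathbb{F}_q^*)^k$ and $i_1 < \cdots < i_k$ by $i = 1, 2, \ldots, N$, where $N = (q-1)^k \left(n \atop k\right)$, and to the $i$th pair associate the vector $v_i = \alpha_1 u'_{i_1} + \cdots + \alpha_k u'_{i_k} \in \mathbb{F}_q^M$. Define the $q \times N$ matrix $A = (n_{ci})$ by $n_{ci} = x_c(v_i)$. Then by the definition (\ref{tm10}),
\[
S(k) = \sum_{i=1}^N \sum_{c<d} n_{ci} n_{di},
\]
so $S(k)$ is exactly the quantity that Lemma \ref{lmmatran} bounds.

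Next I would verify that $A$ satisfies the hypotheses of Lemma \ref{lmmatran}. For each column, $\sum_{c=1}^q n_{ci} = \sum_{c=1}^q x_c(v_i) = M$, since every entry of $v_i$ equals exactly one $\omega_c$. For the total of the lower rows, Lemma \ref{g1} gives
\[
\sum_{i=1}^N \sum_{c=2}^q n_{ci} = \frac{2(q-1)}{q} M P_k^-(n;w) = M'.
\]
Thus $A$ is a valid matrix with constants $M$ and $M'$, and Lemma \ref{lmmatran} tells me that $\sum_i \sum_{c<d} n_{ci}n_{di}$ is largest when the first row is balanced ($|n_{1i} - n_{1j}| \leq 1$) and, in every column, the lower entries are balanced ($|n_{ci} - n_{di}| \leq 1$ for $2 \leq c < d \leq q$). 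Since $A$ itself is merely one such valid matrix, $S(k)$ is bounded above by this maximal value; it remains to evaluate it.

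To evaluate the maximum I would describe the optimal profile explicitly. Writing $M' = q_k N + r_k$ with $0 \leq r_k < N$, balancing the first row forces the lower-row column sums $\sum_{c=2}^q n_{ci} = M - n_{1i}$ to equal $q_k + 1$ on $r_k$ columns and $q_k$ on the remaining $N - r_k$ columns. Splitting $\sum_{c<d}$ into the terms with $c = 1$ and those with $2 \leq c < d \leq q$, the first group contributes $n_{1i} \sum_{d=2}^q n_{di}$, which sums over all columns to $(N - r_k)(M - q_k)q_k + r_k(M - q_k - 1)(q_k+1) = T_1(k)$, using $N = (q-1)^k\left(n \atop k\right)$. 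For the second group, balancing the $q-1$ lower entries of a column whose sum is $q_k$ (respectively $q_k + 1$) places $t_k$ (respectively $t_k'$) of them at $s_k + 1$ (respectively $s_k' + 1$) and the rest at $s_k$ (respectively $s_k'$), by the definitions of $s_k, t_k, s_k', t_k'$; summing the pairwise products of such a profile over the three cases (both small, mixed, both large) yields precisely the bracketed expressions in $T_2(k)$ and $T_3(k)$, weighted by the multiplicities $N - r_k$ and $r_k$. Adding the three contributions gives the maximal value $T_1(k) + T_2(k) + T_3(k)$, whence $S(k) \leq T_1(k) + T_2(k) + T_3(k)$.

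The conceptual heart is the first step, recognizing that $S(k)$ together with its two side constraints matches the hypotheses of Lemma \ref{lmmatran} exactly, so that the optimization is already done for us. The remaining obstacle is purely computational: carefully bookkeeping the balanced profile and checking that the split of $\sum_{c<d}$ reproduces $T_1, T_2, T_3$ term by term. The one point demanding vigilance is that balancing the first row is a constraint across columns while balancing the lower rows is a constraint within each column; Lemma \ref{lmmatran} guarantees both can hold simultaneously, and this is precisely what lets the two types of column (with lower-row sum $q_k$ and $q_k+1$) be analyzed independently.
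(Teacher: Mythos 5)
Your proposal is correct and follows essentially the same route as the paper: you construct the same $q \times N$ matrix $A$ with $N = (q-1)^k \binom{n}{k}$, verify its row and column constraints via Lemma \ref{g1} and the definition of the $x_c$, invoke Lemma \ref{lmmatran} to identify the balanced optimal profile, and evaluate that profile by splitting $\sum_{c<d}$ into the $c=1$ terms (giving $T_1(k)$) and the $2 \leq c < d \leq q$ terms (giving $T_2(k)$ and $T_3(k)$). Your closing observation---that the first-row balancing acts across columns while the lower-row balancing acts within each column, so the two column types can be treated independently---is exactly the bookkeeping the paper's proof carries out implicitly.
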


\begin{proof}
Let $N = (q-1)^k \left(n \atop k \right)$ and let $A$ be the $q \times N$ matrix defined as follows.
\begin{itemize}
\item [$\circ$] The rows of $A$ are indexed by $c$ $(c=1,2, \ldots, q)$.
\item [$\circ$] The columns of $A$ are indexed by pairs $(\alpha, i_1 < \cdots < i_k)$, where $\alpha \in (\mathbb{F}_q^*)^k$ and $1 \leq i_1 < \cdots < i_k \leq q$.
\item [$\circ$] The $(c,(\alpha, i_1 < \cdots < i_k))$ entry of $A$ is $x_c(\alpha_1 u'_{i_1} + \cdots + \alpha_k u'_{i_k})$.
\end{itemize}
Recall that
\begin{eqnarray}
S(k)=\sum_{\substack{\alpha \in (\mathbb{F}_q^*)^k \\ i_1< \cdots < i_k}} \sum_{\substack{c<d}} x_c(\alpha_1 u'_{i_1} + \cdots + \alpha_k u'_{i_k}) \cdot x_d(\alpha_1 u'_{i_1} + \cdots + \alpha_k u'_{i_k}).
\end{eqnarray}
By Lemma \ref{g1},
\begin{eqnarray}
\sum_{\substack{\alpha \in (\mathbb{F}_q^*)^k \\ i_1 < \cdots < i_k}} \sum_{\substack{2 \leq c \leq q}} x_c(\alpha_1 u'_{i_1} + \cdots + \alpha_k u'_{i_k}) = \frac{2(q-1)}{q} M P_k^-(n;w).
\end{eqnarray}
Also, by definition,
\begin{eqnarray}
\sum_{1 \leq c \leq q}x_c(\alpha_1 u'_{i_1} + \cdots + \alpha_k u'_{i_k}) = M
\end{eqnarray}
Hence, we can apply Lemma \ref{lmmatran} to the matrix $A$. Lemma \ref{lmmatran} implies that $S(k)$ is maximum when
\begin{eqnarray}
\sum_{2 \leq c \leq q}x_c(\alpha_1 u'_{i_1} + \cdots + \alpha_k u'_{i_k}) = q_k \quad \mbox{ or } \quad x_1(\alpha_1 u'_{i_1} + \cdots + \alpha_k u'_{i_k}) = M-q_k
\end{eqnarray}
for $(q-1)^k \left(n \atop k \right) - r_k$ pairs $(\alpha, i_1 < \cdots < i_k)$ and
\begin{eqnarray}
\sum_{2 \leq c \leq q}x_c(\alpha_1 u'_{i_1} + \cdots + \alpha_k u'_{i_k}) = q_k+1 \\
\mbox{ or } \quad x_1(\alpha_1 u'_{i_1} + \cdots + \alpha_k u'_{i_k}) = M-q_k-1
\end{eqnarray}
for the other $r_k$ pairs $(\alpha, i_1 < \cdots < i_k)$. Furthermore, for each pair $(\alpha, i_1 < \cdots < i_k)$, the following must hold.
\begin{itemize}
\item [(i)] If $\sum_{2 \leq c \leq q}x_c(\alpha_1 u'_{i_1} + \cdots + \alpha_k u'_{i_k}) = q_k$, then
\begin{eqnarray}
x_e(\alpha_1 u'_{i_1} + \cdots + \alpha_k u'_{i_k}) = s_k
\end{eqnarray}
for $q-1-t_k$ values of $e \geq 2$ and
\begin{eqnarray}
x_e(\alpha_1 u'_{i_1} + \cdots + \alpha_k u'_{i_k}) = s_k+1
\end{eqnarray}
for the other $t_k$ values of $e \geq 2$.
\item [(ii)] If $\sum_{2 \leq c \leq q}x_c(\alpha_1 u'_{i_1} + \cdots + \alpha_k u'_{i_k}) = q_k+1$, then
\begin{eqnarray}
x_e(\alpha_1 u'_{i_1} + \cdots + \alpha_k u'_{i_k}) = s_k'
\end{eqnarray}
for $q-1-t_k'$ values of $e \geq 2$ and
\begin{eqnarray}
x_e(\alpha_1 u'_{i_1} + \cdots + \alpha_k u'_{i_k}) = s_k'+1
\end{eqnarray}
for the other $t_k'$ values of $e \geq 2$.
\end{itemize}
Therefore,
\begin{eqnarray}
\nonumber S(k) & = & \sum_{\substack{\alpha \in (\mathbb{F}_q^*)^k \\i_1< \cdots < i_k}} \sum_{\substack{ 2\leq d \leq q}} x_1(\alpha_1 u'_{i_1} + \cdots + \alpha_k u'_{i_k}) \cdot x_d(\alpha_1 u'_{i_1} + \cdots + \alpha_k u'_{i_k})\\
\nonumber & & + \sum_{\substack{\alpha \in (\mathbb{F}_q^*)^k\\i_1< \cdots < i_k}} \sum_{\substack{2 \leq c<d \leq q}} x_c(\alpha_1 u'_{i_1} + \cdots + \alpha_k u'_{i_k}) \cdot x_d(\alpha_1 u'_{i_1} + \cdots + \alpha_k u'_{i_k})\\
\nonumber & \leq & \left[(q-1)^k \left(n \atop k \right) - r_k \right](M-q_k)q_k + r_k(M-q_k-1)(q_k+1)\\
\nonumber && +\left[(q-1)^k \left( n \atop k \right) - r_k \right] \left[\left(q-1-t_k \atop 2 \right) s_k^2  \right.\\
\nonumber && \quad\quad\quad\quad\quad\quad\quad\quad\quad\quad \left.+  (q-1-t_k)t_ks_k (s_k+1) + \left(t_k \atop 2 \right) (s_k+1)^2 \right]\\
\nonumber && +r_k \left[ \left(q-1-t_k' \atop 2 \right) s_k'^2 + (q-1-t_k')t_k's_k'(s_k'+1) + \left(t_k' \atop 2 \right) (s_k'+1)^2 \right]\\
&=& T_1(k) + T_2(k) + T_3(k).
\end{eqnarray}
\end{proof}

\begin{theorem}\label{maintr}
Suppose that $\{B_i\}_{i=0}^n$ is the distance distribution of a $q$-ary constant-weight code $\mathcal C$ of length $n$ and constant-weight $w$. Then for each $k = 1, 2, \ldots, n$,
\begin{eqnarray}
\sum_{i=1}^n P_k(n;i)B_i \geq (M-1)(q-1)^k\left(n \atop k \right) - \frac{2q}{(q-1)M}T(k),
\end{eqnarray}
where $T(k) = T_1(k) + T_2(k) + T_3(k)$.
\end{theorem}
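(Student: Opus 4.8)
The plan is to recognize that this theorem is the constant-weight analogue of Theorem \ref{delthm}, and that essentially all of the combinatorial work has already been carried out in the preceding lemmas; what remains is to assemble the pieces with the correct signs. First I would invoke Lemma \ref{qt2}, which expresses both $\sum_{i=1}^n P_k^-(n;i)B_i$ and $\sum_{i=1}^n P_k^+(n;i)B_i$ in terms of the single quantity $S(k)$ defined in (\ref{tm10}). Then I would use the identity recorded in the Remark, namely $P_k^+(n;x) - P_k^-(n;x) = P_k(n;x)$, to write
\begin{eqnarray}
\nonumber \sum_{i=1}^n P_k(n;i)B_i & = & \sum_{i=1}^n P_k^+(n;i)B_i - \sum_{i=1}^n P_k^-(n;i)B_i\\
\nonumber & = & (M-1)(q-1)^k \left(n \atop k\right) - \frac{q}{(q-1)M}S(k) - \frac{q}{(q-1)M}S(k)\\
\nonumber & = & (M-1)(q-1)^k \left(n \atop k\right) - \frac{2q}{(q-1)M}S(k).
\end{eqnarray}
This is an exact identity, valid for any code; the constant-weight hypothesis has not yet been used.

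The second and final step is to feed in the constant-weight information through Lemma \ref{3t}, which gives the bound $S(k) \leq T_1(k) + T_2(k) + T_3(k) = T(k)$. Since the coefficient $\frac{2q}{(q-1)M}$ is strictly positive, multiplying $S(k) \leq T(k)$ through by $-\frac{2q}{(q-1)M}$ reverses the inequality, so $-\frac{2q}{(q-1)M}S(k) \geq -\frac{2q}{(q-1)M}T(k)$. Combining this with the identity above yields
\begin{eqnarray}
\nonumber \sum_{i=1}^n P_k(n;i)B_i \geq (M-1)(q-1)^k \left(n \atop k\right) - \frac{2q}{(q-1)M}T(k),
\end{eqnarray}
which is exactly the claimed inequality.

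Because the substantive content sits inside the earlier lemmas, the theorem's proof itself is little more than a two-line bookkeeping argument, and there is no genuine obstacle left at this stage. The only point demanding care is the sign: one must remember that flipping $S(k) \leq T(k)$ into a lower bound on $\sum P_k(n;i)B_i$ requires the inequality to reverse, and that this is legitimate precisely because $\frac{2q}{(q-1)M} > 0$. If I were to locate where the real difficulty resided, it would be in justifying Lemma \ref{3t}, whose proof rests on the optimization result Lemma \ref{lmmatran} together with the counting identity of Lemma \ref{g1}; by contrast, the present theorem simply packages those results into the Krawtchouk form that is useful for linear programming.
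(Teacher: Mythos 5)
Your proposal is correct and is essentially identical to the paper's proof: the paper likewise combines the two identities of Lemma \ref{qt2} (equivalently, $P_k^+ - P_k^- = P_k$ from the Remark) to get the exact identity $\sum_{i=1}^n P_k(n;i)B_i = (M-1)(q-1)^k\binom{n}{k} - \frac{2q}{(q-1)M}S(k)$, and then applies the bound $S(k) \leq T(k)$ from Lemma \ref{3t}; the paper merely writes the argument for $-\sum_{i=1}^n P_k(n;i)B_i$, which is the same computation with the opposite sign convention. Your observations that the identity holds for arbitrary codes, that the constant-weight hypothesis enters only through Lemma \ref{3t}, and that the sign reversal is licensed by $\frac{2q}{(q-1)M}>0$ are all accurate.
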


\begin{proof}
Considering $\mathcal C$ as a $q$-ary code and applying Lemma \ref{qt2}, we get
\begin{eqnarray}
\nonumber - \sum_{i=1}^n P_k(n;i)B_i & = & \sum_{i=1}^n P_k^-(n;i)B_i - \sum_{i=1}^n P_k^+(n;i)B_i\\
& = & -(M-1)(q-1)^k \left(n \atop k \right) + \frac{2q}{(q-1)M}S(k).
\end{eqnarray}
By Lemma \ref{3t},
\begin{eqnarray}
S(k) \leq T_1(k) + T_2(k) + T_3(k) = T(k).
\end{eqnarray}
Hence,
\begin{eqnarray}
\nonumber - \sum_{i=1}^n P_k(n;i)B_i  \leq -(M-1)(q-1)^k \left(n \atop k \right) + \frac{2q}{(q-1)M}T(k).
\end{eqnarray}
\end{proof}

When $k=1$, Theorem \ref{maintr} implies the following corollary, which appeared in \cite{o} (see \cite[Theorem 12]{o}).

\begin{corollary}
If there exists a $q$-ary constant-weight code of length $n$, constant-weight $w$, and minimum distance $\geq d$, then
\begin{eqnarray}
M(M-1)d \leq  2t \sum_{i=0}^{q-2} \sum_{j=i+1}^{q-1} M_i M_j + 2(n-t) \sum_{i=0}^{q-2} \sum_{j=i+1}^{q-1} M'_i M'_j,
\end{eqnarray}
where
\begin{itemize}
\item [$\circ$] $k$ and $t$ are the quotient and the remainder, respectively, when dividing $Mw$ by $n$,
\item [$\circ$] $M_0 = M - k - 1$, $M'_0 = M-k$, $M_i = \lfloor (k+i)/(q-1) \rfloor$, and $M'_i = \lfloor (k+i-1)/(q-1) \rfloor$.
\end{itemize}
\end{corollary}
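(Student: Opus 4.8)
The plan is to specialize Theorem \ref{maintr} to the index value $1$ and translate both sides into the quantities appearing in the corollary. Writing the theorem's index as $1$, the left-hand side involves the first Krawtchouk polynomial; a direct computation from the definition gives $P_1(n;x) = (q-1)(n-x) - x = (q-1)n - qx$. Hence, using $B_0 = 1$ and $\sum_{i=0}^n B_i = M$ so that $\sum_{i=1}^n B_i = M-1$, one finds
\[
\sum_{i=1}^n P_1(n;i)B_i = (q-1)n(M-1) - q\sum_{i=1}^n iB_i.
\]
Substituting this into the inequality of Theorem \ref{maintr} (with index $1$) and cancelling the common term $(q-1)n(M-1)$ reduces the statement to the clean form $\sum_{i=1}^n iB_i \leq \frac{2}{(q-1)M}T(1)$.

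Next I would bound the left-hand side from below using the minimum distance. By definition of the distance distribution, $\sum_{i=1}^n iB_i = \frac{1}{M}\sum_{u,v\in\mathcal C}d(u,v)$, where the sum runs over ordered pairs; since there are $M(M-1)$ pairs with $u\neq v$, each contributing distance at least $d$, we get $\sum_{i=1}^n iB_i \geq (M-1)d$. Combining with the previous bound yields $M(M-1)(q-1)d \leq 2T(1)$, so it remains only to show $T(1) = (q-1)\bigl[t\sum_{i<j}M_iM_j + (n-t)\sum_{i<j}M'_iM'_j\bigr]$, after which dividing by $(q-1)$ gives exactly the asserted inequality.

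The bulk of the work, and the main obstacle, is evaluating $T(1)$ and matching it to the two double sums. First I would compute $P_1^-(n;w)$ from its definition; the $j=0$ term vanishes and the $j=1$ term gives $P_1^-(n;w) = \frac{qw}{2}$, so that $\frac{2(q-1)M}{q}P_1^-(n;w) = (q-1)Mw$. Dividing this by $N = (q-1)^1\binom{n}{1} = (q-1)n$ and writing $Mw = kn + t$ (the corollary's notation) identifies the quotient $q_1 = \lfloor Mw/n\rfloor = k$ and the remainder $r_1 = (q-1)t$. Thus there are $N - r_1 = (q-1)(n-t)$ pairs in the ``$q_1$'' case and $r_1 = (q-1)t$ pairs in the ``$q_1+1$'' case. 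Then I would check that the balanced distribution of $q_1 = k$ nonzero symbols into $q-1$ classes appearing in case (i) of Lemma \ref{3t} is precisely the multiset $\{M'_1,\ldots,M'_{q-1}\}$ (namely $t_1$ copies of $s_1+1$ and $q-1-t_1$ copies of $s_1$), while adding $x_1 = M-k = M'_0$ yields $\sum_{0\le i<j\le q-1}M'_iM'_j$; symmetrically, the distribution of $q_1+1 = k+1$ symbols gives $\{M_1,\ldots,M_{q-1}\}$ and, with $x_1 = M-k-1 = M_0$, the sum $\sum_{0\le i<j\le q-1}M_iM_j$. With these identifications, $T_1(1)$ collects the $c=1$ (zero-symbol) contributions $M'_0\cdot k$ and $M_0\cdot(k+1)$, while $T_2(1)$ and $T_3(1)$ collect the $2\le c<d\le q$ contributions in the two cases; summing gives the desired closed form for $T(1)$.

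I expect the delicate points to be purely bookkeeping: verifying that the floor expressions $\lfloor(k+i)/(q-1)\rfloor$ and $\lfloor(k+i-1)/(q-1)\rfloor$ reproduce exactly the balanced quotient/remainder pairs $(s_1,t_1)$ and $(s_1',t_1')$ of Lemma \ref{3t}, and confirming that the $c=1$ terms fold into the double sums so that $\sum_{1\le i<j}M'_iM'_j + M'_0\, k = \sum_{0\le i<j}M'_iM'_j$ (and likewise $\sum_{1\le i<j}M_iM_j + M_0(k+1) = \sum_{0\le i<j}M_iM_j$). These are elementary but require care with the index ranges; once settled, the factor $(q-1)$ cancels cleanly and the corollary follows.
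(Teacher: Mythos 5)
Your proposal is correct and follows essentially the same route as the paper: specialize Theorem \ref{maintr} to $k=1$, use the minimum distance to reduce to $M(M-1)d \leq \frac{2}{q-1}T(1)$ (your lower bound $\sum_i iB_i \geq (M-1)d$ is equivalent to the paper's estimate $\sum_{i\geq d}[(q-1)n-qi]B_i \leq (M-1)[(q-1)n-qd]$), and then evaluate $T(1)$ by matching the balanced distributions of Lemma \ref{3t} to the quantities $M_i$, $M'_i$. In fact your bookkeeping is the correct one: dividing $(q-1)Mw$ by $(q-1)n$ gives $q_1=k$ and $r_1=(q-1)t$, hence $s_1=\lfloor k/(q-1)\rfloor$, $t_1 = k \bmod (q-1)$, $s_1'=\lfloor (k+1)/(q-1)\rfloor$, $t_1'=(k+1)\bmod (q-1)$, so that $\{M'_i\}_{i\geq 1}$ and $\{M_i\}_{i\geq 1}$ are exactly the balanced partitions of $k$ and $k+1$ into $q-1$ parts; the paper's printed intermediate values ($q_1=(q-1)k$, $s_1=k$, $t_1=0$, $s_1'=k$, $t_1'=1$) contain a slip that your identification silently repairs while arriving at the same final formula for $T(1)$.
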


\begin{proof}
Applying Theorem \ref{maintr} (for $k = 1$), we have
\begin{eqnarray}\label{tamthuong1}
\sum_{i=1}^n [(q-1)n - qi]B_i \geq (M-1)(q-1)n- \frac{2q}{(q-1)M}T(1).
\end{eqnarray}
Since the code has minimum distance $\geq d$, $B_i = 0$ if $1 < i < d$. Multiplying two sides of (\ref{tamthuong1}) by $M$, we get
\begin{eqnarray}
\nonumber M(M-1)(q-1)n- \frac{2q}{(q-1)}T(1) & \leq & M \sum_{i=1}^n [(q-1)n - qi]B_i\\
\nonumber & = & M \sum_{i=d}^n [(q-1)n - qi]B_i \\
\nonumber & \leq & M \sum_{i=d}^n [(q-1)n - qd]B_i \\
& = & M(M-1)[(q-1)n - qd].
\end{eqnarray}
This implies
\begin{eqnarray}\label{aa1}
M(M-1)d \leq \frac{2}{q-1}T(1).
\end{eqnarray}
By hypothesis, $Mw = kn + t$ $(0 \leq t <n)$. Hence, if $q_1$ and $r_1$ are the quotient and the remainder, respectively, when dividing $\frac{2(q-1)M}{q}P_1^-(n;w) = (q-1)Mw$ by $(q-1)n$, then $q_1 = (q-1)k$ and $r_1 = (q-1)t$. We also have $s_1 = k$, $t_1 = 0$, $s'_1 = k$ and $t'_1 =1$. Now it is not difficult to see that
\begin{eqnarray}\label{aa2}
\nonumber T(1) & = & T_1(1) + T_2(1) + T_3(1) \\
\nonumber & = &\{[(q-1)n - r_1] (M-q_1)q_1 + T_2(1)\} + \{r_1(M-q_1-1)(q_1+1) + T_3(1)\}\\
& = & (q-1)(n-t)\sum_{i=1}^{q-2} \sum_{j=i+1}^{q-1} M'_i M'_j + (q-1)t\sum_{i=1}^{q-2} \sum_{j=i+1}^{q-1} M_i M_j.
\end{eqnarray}
From (\ref{aa1}) and (\ref{aa2}), we get the desired result
\begin{eqnarray}
M(M-1)d \leq  2t \sum_{i=0}^{q-2} \sum_{j=i+1}^{q-1} M_i M_j + 2(n-t) \sum_{i=0}^{q-2} \sum_{j=i+1}^{q-1} M'_i M'_j.
\end{eqnarray}
\end{proof}

\end{document}